\def\control{0}
\newcommand\blfootnote[1]{%
  \begingroup
  \renewcommand\thefootnote{}\footnote{#1}%
  \addtocounter{footnote}{-1}%
  \endgroup
}
\begin{document}
\title{How accurate does Newton have to be?}
%
%
\author{Carl Christian Kjelgaard Mikkelsen \inst{1}\orcidID{0000-0002-9158-1941 } \and
Lori{\'e}n L{\'o}pez-Villellas \inst{2}\orcidID{0000-0002-1891-4359} \and Pablo Garc{\'i}a-Risue{\~no} \inst{3} \orcidID{0000-0002-8142-9196}}
\authorrunning{C. C. Kjelgaard Mikkelsen et al. }
%
\institute{
  Department of Computing Science, Ume{\aa} University, 90187 Ume{\aa}, Sweden \email{spock@cs.umu.se} \\
  \and Barcelona Supercomputing Center, Barcelona, Spain, \\
  \email{lorien.lopez@bsc.es} \\
  \and Independent scholar \\
  \email{risueno@unizar.es, garcia.risueno@gmail.com}
}

\maketitle              
\begin{abstract} We analyze the convergence of quasi-Newton methods in exact and finite precision arithmetic. In particular, we derive an upper bound for the stagnation level and we show that any sufficiently exact quasi-Newton method will converge quadratically until stagnation. In the absence of sufficient accuracy, we are likely to retain rapid linear convergence. We confirm our analysis by computing square roots and solving bond constraint equations in the context of molecular dynamics. We briefly discuss implications for parallel solvers.
\keywords{Systems of nonlinear equations \and Quasi-Newton methods \and approximation error \and rounding error \and convergence \and stagnation}
\end{abstract}

\blfootnote{Preprint accepted by the conference PPAM-2022 on 2022-06-30. To appear in Springer's LNCS, vol. 13826.}

\section{Introduction} Let $\Omega \subseteq \mathbb{R}^n$ be open, let $F \in C^1(\Omega, \mathbb{R}^n)$ and consider the problem of solving
\begin{equation*}
F(x) = 0.
\end{equation*}
If the Jacobian $F'$ of $F$ is nonsingular, then Newton's method is given by
\begin{equation} \label{equ:newton} 
  x_{k+1} = x_k - s_k, \quad F'(x_k)s_k = F(x_k).
\end{equation}
A quasi-Newton method is any iteration of the form
\begin{equation} \label{equ:quasi-newton} 
  y_{k+1} = y_k - t_k, \quad F'(y_k)t_k \approx F(y_k).
\end{equation}
In exact arithmetic, we expect local quadratic convergence from Newton's method \cite{mysovskii1949}. Quasi-Newton methods normally converge locally and at least linearly and some methods, such as the secant method, have superlinear convergence \cite{ortega_iterative_1970,kelley_iterative_1995}. In finite precision arithmetic, we cannot expect convergence in the strict mathematical sense and we must settle for stagnation near a zero \cite{tisseur2001}. In this paper we analyze the convergence of quasi-Newton methods in exact and finite precision arithmetic. In particular, we derive an upper bound for the stagnation level and we show that any sufficiently exact quasi-Newton method will converge quadratically until stagnation. We confirm our analysis by computing square roots and solving bond constraint equations in the context of molecular dynamics.

\section{Auxiliary results}



The line segment $l(x,y)$ between $x$ and $y$ is defined as follows:
\begin{equation*}
  l(x,y) = \{ tx + (1-t) y \: : \: t \in [0,1] \}.
\end{equation*}

The following lemma is a standard result that bounds the difference between $F(x)$ and $F(y)$ if the line segment $l(x,y)$ is contained in the domain of $F$. 
  
\begin{lemma} \label{lem:mean-value-theorem} Let $\Omega \subseteq \mathbb{R}^n$ be open and let $F \in C^1(\Omega,\mathbb{R}^n)$. If $l(x,y) \subset \Omega$, then
  \begin{equation*}
    F(x) - F(y) = \int_0^1 F'(tx + (1-t)y)(x-y) dt
  \end{equation*}
  and
  \begin{equation*}
    \|F(x) - F(y)\| \leq M \|x-y\|.
  \end{equation*}
  where
  \begin{equation*}
    M = \sup \{ \| F'(tx + (1-t)y) \| \: : \: t \in [0,1] \}.
  \end{equation*}
\end{lemma}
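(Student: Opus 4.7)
The plan is to reduce the multivariate statement to the one-dimensional fundamental theorem of calculus via the standard parameterization of the line segment, then obtain the norm bound by passing $\|\cdot\|$ inside the integral.

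First I would introduce the auxiliary curve $\phi : [0,1] \to \mathbb{R}^n$ defined by
\begin{equation*}
\phi(t) = F(tx + (1-t)y).
\end{equation*}
Since $l(x,y) \subset \Omega$ and $F \in C^1(\Omega, \mathbb{R}^n)$, the composition $\phi$ is well-defined and continuously differentiable on $[0,1]$. The chain rule gives
\begin{equation*}
\phi'(t) = F'(tx + (1-t)y)(x-y),
\end{equation*}
and applying the ordinary fundamental theorem of calculus componentwise to $\phi = (\phi_1, \dots, \phi_n)$ yields
\begin{equation*}
F(x) - F(y) = \phi(1) - \phi(0) = \int_0^1 \phi'(t)\, dt = \int_0^1 F'(tx + (1-t)y)(x-y)\, dt,
\end{equation*}
which is the first claim.

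For the norm inequality, I would take $\|\cdot\|$ of both sides and push the norm through the (vector-valued) integral by the standard estimate $\bigl\|\int_0^1 g(t)\, dt\bigr\| \leq \int_0^1 \|g(t)\|\, dt$, followed by the submultiplicativity of the induced operator norm:
\begin{equation*}
\|F(x) - F(y)\| \leq \int_0^1 \|F'(tx + (1-t)y)\|\, \|x-y\|\, dt \leq M \|x-y\|.
\end{equation*}
The supremum defining $M$ is finite because $t \mapsto \|F'(tx + (1-t)y)\|$ is continuous on the compact interval $[0,1]$, so it is in fact a maximum.

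There is no real obstacle here; the only point that deserves attention is justifying the vector form of the fundamental theorem of calculus and the norm-integral inequality, both of which follow from componentwise application of their scalar counterparts together with the equivalence of norms on $\mathbb{R}^n$. If the paper adopts a specific convention for the induced operator norm of $F'$, I would make sure my use of submultiplicativity is consistent with that convention; otherwise the argument is entirely routine.
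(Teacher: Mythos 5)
Your proof is correct and follows essentially the same route as the paper's: define $\phi(t) = F(tx+(1-t)y)$, apply the chain rule and the fundamental theorem of calculus for the integral identity, then pass the norm through the integral and bound by the supremum. Nothing further is needed.
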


\ifnum \control=1
\begin{proof} Let $\phi : [0,1] \rightarrow \mathbb{R}^n$ denote the function given by
  \begin{equation*}
    \phi(t) = F(tx + (1-t)y).
  \end{equation*}
  Then $\phi$ is well-defined and the chain rule implies that $\phi \in C^1([0,1],\mathbb{R}^n)$ with
  \begin{equation*}
    \phi'(t) = F'(tx + (1-t)y)(x-y).
  \end{equation*}
  By the fundamental theorem of calculus we have
  \begin{equation*}
    F(x) - F(y) = \phi(1) - \phi(0) = \int_0^1  F'(tx + (1-t)y)(x-y) dt.
  \end{equation*}
  By the triangle inequality we have
   \begin{equation*}
     \| F(x) - F(y) \| \leq \int_0^1  \| F'(tx + (1-t)y) \| \| x-y \| dt \leq M \|x-y\|.
    \end{equation*}
   This completes the proof.
\end{proof}
\fi

It is convenient to phrase Newton's method as the functional iteration:
\begin{equation*}
  x_{k+1} = g(x_k), \quad g(x) = x - F'(x)^{-1}F(x).
\end{equation*}
and to express the analysis of quasi-Newton methods in terms of the function $g$. The next lemma can be used to establish local quadratic convergence of Newton's method.

\begin{lemma} \label{lem:newton:error-formula} Let $\Omega \subseteq \mathbb{R}^n$ be open and let $F \in C^1(\Omega, \mathbb{R}^n)$. Let $z$ denote a zero of $F$ and let $x \in \Omega$. If $F'(x)$ is nonsingular and if $l(x,z) \subset \Omega$, then
  \begin{equation*}  
  g(x) - z = C(x)(x-z)
  \end{equation*}
  where
  \begin{equation*}
    C(x) = F'(x)^{-1} \left( \int_0^1 \left[ F'(x) - F'(tx + (1-t)z) \right] dt \right) 
  \end{equation*}
  Moreover, if $F'$ is Lipschitz continuous with Lipschitz constant $L>0$, then
  \begin{equation*}
    \|g(x) - z \| \leq \frac{1}{2} \|F'(x)^{-1}\| L \|x - z \|^2.
  \end{equation*}
\end{lemma}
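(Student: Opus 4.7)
The plan is to reduce everything to the integral form of the mean value theorem given by Lemma~\ref{lem:mean-value-theorem} and then exploit the freedom to rewrite $x-z$ as a trivial integral as well. First I would start from the definition of $g$ and write
\begin{equation*}
  g(x) - z = (x - z) - F'(x)^{-1} F(x).
\end{equation*}
Because $z$ is a zero of $F$ and $l(x,z) \subset \Omega$, Lemma~\ref{lem:mean-value-theorem} applies and yields
\begin{equation*}
  F(x) = F(x) - F(z) = \int_0^1 F'(tx + (1-t)z)(x-z)\, dt.
\end{equation*}

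Next, I would use the key trick: rewrite the first term as $x - z = F'(x)^{-1} F'(x) (x-z)$ and note that the vector $F'(x)(x-z)$ is independent of $t$, so it equals $\int_0^1 F'(x)(x-z)\,dt$. Factoring out $F'(x)^{-1}$ and combining the two integrals gives exactly
\begin{equation*}
  g(x) - z = F'(x)^{-1} \left( \int_0^1 [F'(x) - F'(tx + (1-t)z)]\, dt \right) (x-z) = C(x)(x-z),
\end{equation*}
which is the first assertion.

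For the Lipschitz bound, I would pull the norm inside the integral via the triangle inequality and estimate the integrand pointwise. Since $x - (tx + (1-t)z) = (1-t)(x-z)$, Lipschitz continuity of $F'$ gives
\begin{equation*}
  \|F'(x) - F'(tx + (1-t)z)\| \le L(1-t)\|x-z\|.
\end{equation*}
Submultiplicativity of the operator norm then yields
\begin{equation*}
  \|g(x)-z\| \le \|F'(x)^{-1}\| \, L \, \|x-z\|^2 \int_0^1 (1-t)\, dt = \tfrac{1}{2}\|F'(x)^{-1}\| L \|x-z\|^2,
\end{equation*}
which is the claimed quadratic estimate.

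I do not expect any real obstacle: the whole argument is a careful bookkeeping exercise. The only point that requires a moment of attention is the \emph{trivial integral trick} that turns $x-z$ into $\int_0^1 F'(x)(x-z)\,dt$ so that the two integrals can be subtracted; once this is spotted, both the identity for $C(x)$ and the norm bound follow immediately from Lemma~\ref{lem:mean-value-theorem} and the Lipschitz hypothesis.
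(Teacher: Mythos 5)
Your proposal is correct and follows essentially the same route as the paper: write $g(x)-z = F'(x)^{-1}\left[ F'(x)(x-z) - (F(x)-F(z)) \right]$, express $F(x)-F(z)$ via the integral form of the mean value theorem (Lemma~\ref{lem:mean-value-theorem}), combine the integrals, and then bound the integrand by $L(1-t)\|x-z\|$ to get the factor $\tfrac{1}{2}$. The ``trivial integral trick'' you highlight is exactly the step the paper uses implicitly when absorbing $F'(x)(x-z)$ into the integral, so there is no substantive difference.
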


\ifnum\control=1
\begin{proof} The function $g$ is defined at $x \in \Omega$ because $F'(x)$ is nonsingular. We have
  
  \begin{align}
    g(x) - z
    &= (x - z) - F'(x)^{-1} ( F(x) - F(z)) \\
    &= F'(x)^{-1} \left[ F'(x) (x-z) - \left( F(x) - F(z) \right) \right] \\
    &= F'(x)^{-1} \int_0^1 \left [ F'(x) - F'(tx + (1-t)z) \right] (x-z) dt \\
    &= C(x) (x-z).
  \end{align}
  
  Now if $F'$ is Lipschitz continuous with Lipschitz constant $L>0$, then
  \begin{equation*}
    \|C(x)\| \leq \|F'(x)^{-1} \| \int_0^1 L (1-t) \|x-z\| dt = \frac{1}{2} \|F'(x)^{-1} \| L  \|x-z\|.
  \end{equation*}
  This completes the proof.
\end{proof}
\fi

The following lemma allows us to write any approximation as a very simple function of the target vector.

\begin{lemma} \label{lem:error-function} Let $x \in \mathbb{R}^n$ be nonzero, let $y \in \mathbb{R}^n$ be an approximation of $x$ and let $E \in \mathbb{R}^{n \times n}$ be given by
  \begin{equation*}
    E = \frac{1}{x^Tx} (y-x)x ^T.
  \end{equation*}
  Then
  \begin{equation*}
    y = (I + E) x, \quad \|E\| = O\left(\frac{\|x-y\|}{\|x\|} \right), \quad y \rightarrow x, \quad y \not = x.
  \end{equation*}
  In the special case of the 2-norm we have
  \begin{equation*}
    \|E\|_2 = \frac{\|x-y\|_2}{\|x\|_2}.
  \end{equation*}
\end{lemma}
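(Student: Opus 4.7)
The plan is to handle the three assertions in the order they are stated, as each is essentially a one-line computation leveraging the rank-one structure of $E$.

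First I would verify the identity $y = (I+E)x$ by direct substitution. Since $x^T x$ is a scalar, we have $Ex = \frac{1}{x^T x}(y-x)(x^T x) = y-x$, so $(I+E)x = x + (y-x) = y$. This uses only the assumption that $x$ is nonzero (so that $x^T x \neq 0$).

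Next I would treat the 2-norm case, which is the sharpest statement. The matrix $E = \frac{1}{x^T x}(y-x)x^T$ is a rank-one matrix of the form $uv^T$ with $u = (y-x)/\|x\|_2^2$ and $v = x$. For such matrices it is standard that $\|uv^T\|_2 = \|u\|_2 \|v\|_2$; I would either cite this or give a two-line derivation (the upper bound follows from Cauchy--Schwarz applied to $uv^T w = u(v^T w)$, and equality is attained at $w = v/\|v\|_2$). Substituting gives the claimed identity $\|E\|_2 = \|x-y\|_2/\|x\|_2$.

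Finally, for a general norm $\|\cdot\|$, I would invoke the equivalence of norms on $\mathbb{R}^n$ and $\mathbb{R}^{n \times n}$: there exist constants $c_1,c_2 > 0$ (depending on the norm and on $n$) such that $\|E\| \leq c_1 \|E\|_2$ and $\|x-y\|_2/\|x\|_2 \leq c_2 \|x-y\|/\|x\|$ up to a fixed factor involving $\|x\|/\|x\|_2$, which is bounded as $y \to x$ since $x$ is fixed. Combining these with the 2-norm identity yields $\|E\| = O(\|x-y\|/\|x\|)$ as $y \to x$.

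I do not expect any serious obstacle here: the identity $y=(I+E)x$ is algebraic, the 2-norm bound is the standard rank-one operator norm, and the general case is pure norm equivalence. The only mild subtlety is stating carefully that the implicit constant in the big-$O$ depends on the choice of norm (and on $x$, via the ratio between the chosen norm and the 2-norm), but this is precisely why the lemma singles out the 2-norm for the sharp equality.
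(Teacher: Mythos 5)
Your proposal is correct, and for the identity $y=(I+E)x$ and the 2-norm equality it is essentially the paper's own argument: verify $Ex = y-x$ directly, and obtain $\|E\|_2 = \|x-y\|_2/\|x\|_2$ from the rank-one structure, with equality attained at $z=x$ (equivalently your $w=v/\|v\|_2$). The only real difference is in the big-$O$ statement for a general norm: you first establish the 2-norm identity and then transfer it by equivalence of norms on $\mathbb{R}^n$ and $\mathbb{R}^{n\times n}$, whereas the paper bounds $\|Ez\|$ directly in the given norm, writing $\|Ez\| \leq \frac{1}{\|x\|_2^2}\|y-x\|\,|x^Tz| \leq \bigl(\|x^T\|\,\|x\|/\|x\|_2^2\bigr)\bigl(\|x-y\|/\|x\|\bigr)\|z\|$, so the implicit constant appears explicitly as the factor $\|x^T\|\,\|x\|/\|x\|_2^2$ relating the chosen norm (and its dual) to the 2-norm. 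Both routes are elementary and valid; the paper's gives a concrete constant without invoking norm equivalence, while yours isolates the sharp 2-norm case and treats everything else as a soft consequence. Your closing remark that the constant depends on the norm (and on $x$ only through norm ratios) is exactly the right caveat and matches the paper's explicit factor.
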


\begin{proof} It is straightforward to verify that
  \begin{equation*}
    (I + E)x = x + \frac{1}{x^Tx} (y-x) x^T x = x + (y-x) = y.
  \end{equation*}
  Moreover, if $z$ is any vector, then
  \begin{equation*}
  \|Ez\| \leq \frac{1}{\|x\|_2^2} \|y-x\| \|x^Tz\| = \left( \frac{\|x^T\| \|x\|}{\|x\|_2^2} \right) \left( \frac{\|x-y\|}{\|x\|} \right) \|z\|.
  \end{equation*}
  In the case of the 2-norm, we have
  \begin{equation*}
    \|Ez\|_2 \leq \frac{\|x-y\|_2}{\|x\|_2} \|z\|_2
  \end{equation*}
  for all $z \not = 0$ and equality holds for $z=x$. This completes the proof.
\end{proof}

\section{Main Results} \label{sec:main-results}

In the presence of rounding errors, \emph{any} quasi-Newton method can written as
\begin{equation} \label{equ:quasi-newton-with-errors:1}
  x_{k+1} = (I + D_k) \Big( x_k - (I + E_k) F'(x_k)^{-1} F(x_k) \Big).
\end{equation}
Here $D_k \in \mathbb{R}^{n \times n}$ is a diagonal matrix which represents the rounding error in the subtraction and $E_k \in \mathbb{R}^{n \times n}$ measures the difference between the computed correction and the correction used by Newton's method. We simply treat the update $t_k$ needed for the quasi-Newton method \eqref{equ:quasi-newton} as an approximation of the update $s_k = F'(x_k)^{-1} F(x_k)$ needed for Newton's method \eqref{equ:newton} and define $E_k$ using Lemma \ref{lem:error-function}. It is practical to restate iteration \eqref{equ:quasi-newton-with-errors:1} in terms of the function $g$, i.e.,
\begin{equation} \label{equ:quasi-newton-with-errors:2} 
  x_{k+1} = (I + D_k) \Big( g(x_k)  - E_k F'(x_k)^{-1} F(x_k) \Big).
\end{equation}

We shall now analyze the behavior of iteration \eqref{equ:quasi-newton-with-errors:2}. For the sake of simplicity, we will assume that there exist nonnegative numbers $K$, $L$, and $M$ such that
\begin{equation*}
\forall x \: : \: \|F'(x)^{-1}\| \leq K, \quad  \|F'(x) - F'(y) \| \leq L \|x-y\|, \quad \|F'(x)\| \leq M.
\end{equation*}
In reality, we only require that these inequalities are satisfied in a neighborhood of a zero. We have the following generalization of Lemma \ref{lem:newton:error-formula}.

\begin{theorem} \label{thm:main} The functional iteration given by equation \eqref{equ:quasi-newton-with-errors:2} satisfies
  \begin{multline} \label{equ:generalized-error-formula} 
    x_{k+1} - z  = g(x_k) - z - E_k F'(x_k)^{-1} F(x_k) \\ + D_k \big[ g(x_k) - E_k F'(x_k)^{-1} F(x_k) \big]
  \end{multline}
and
\begin{multline} \label{equ:generalized-upper-bound} 
    \|x_{k+1} - z \| \leq \frac{1} {2} L K \|x_k - z \|^2 + \|E_k\| K M \|x_k-z\| \\ + \|D_k\| \left( \|z\| + \frac{1} {2} L K \|x_k - z\|^2 + \|E_k\| K M \|x_k-z\| \right).
  \end{multline}
\end{theorem}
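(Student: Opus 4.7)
The plan is to prove the equality first and then bound each piece using the lemmas already established.

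For equation \eqref{equ:generalized-error-formula}, I would set $v_k = g(x_k) - E_k F'(x_k)^{-1} F(x_k)$ so that the iteration \eqref{equ:quasi-newton-with-errors:2} becomes $x_{k+1} = (I+D_k)v_k = v_k + D_k v_k$. Subtracting $z$ from both sides and expanding $v_k$ gives the claimed identity in one line. This is purely algebraic and essentially just unpacks the definition of $v_k$.

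For the upper bound \eqref{equ:generalized-upper-bound}, I would apply the triangle inequality to the identity just proved, producing three groups of terms: $\|g(x_k)-z\|$, $\|E_k F'(x_k)^{-1}F(x_k)\|$, and $\|D_k v_k\|$. The first term is bounded by $\tfrac{1}{2}LK\|x_k-z\|^2$ by Lemma \ref{lem:newton:error-formula} together with the uniform bound $\|F'(x)^{-1}\|\le K$. For the second term I would use submultiplicativity to get $\|E_k F'(x_k)^{-1}F(x_k)\|\le\|E_k\|K\|F(x_k)\|$, and then apply Lemma \ref{lem:mean-value-theorem} with the fact that $F(z)=0$ and $\|F'\|\le M$ to obtain $\|F(x_k)\|=\|F(x_k)-F(z)\|\le M\|x_k-z\|$, yielding the bound $\|E_k\|KM\|x_k-z\|$.

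The only mildly delicate step is the $D_k$ term, because the bound contains a term $\|z\|$ that does not vanish as $x_k\to z$. To get it, I would write $v_k=(g(x_k)-z)+z-E_kF'(x_k)^{-1}F(x_k)$ and apply the triangle inequality to obtain
\begin{equation*}
\|v_k\|\le \|z\|+\|g(x_k)-z\|+\|E_k F'(x_k)^{-1}F(x_k)\|,
\end{equation*}
then reuse the two bounds from the preceding paragraph. Multiplying by $\|D_k\|$ gives exactly the parenthesized expression in \eqref{equ:generalized-upper-bound}. Summing the three contributions completes the proof.

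I expect no serious obstacle: the argument is bookkeeping with the triangle inequality and submultiplicativity. The one point requiring a little care is ensuring that the $\|z\|$ factor appears only inside the $\|D_k\|$ term (since it originates from the absolute rounding in the subtraction), which is handled naturally by splitting $v_k$ around $z$ rather than around $0$.
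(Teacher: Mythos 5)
Your proposal is correct and follows the paper's own argument: verify the identity by expanding $(I+D_k)v_k$, then apply the triangle inequality with Lemma \ref{lem:newton:error-formula} for $\|g(x_k)-z\|$, Lemma \ref{lem:mean-value-theorem} (with $F(z)=0$) for the $E_k$ term, and the splitting $\|g(x_k)\|\le\|z\|+\|g(x_k)-z\|$ inside the $D_k$ term, exactly as the paper does. No differences worth noting; your write-up simply spells out the bookkeeping the paper leaves implicit.
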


\begin{proof} It is straightforward to verify that equation \eqref{equ:generalized-error-formula} is correct. Inequality \eqref{equ:generalized-upper-bound} follows from equation  \eqref{equ:generalized-error-formula} using the triangle inequality, Lemma \ref{lem:mean-value-theorem}, and Lemma \ref{lem:newton:error-formula}. The second occurrence of the term $\|g(x_k)\|$ can be bounded using the inequality
  \begin{equation*}
    \|g(x_k)\| \leq \|z\| + \|g(x_k) - z\|.
  \end{equation*}
  This completes the proof.
\end{proof}

It is practical to focus on the case of $z \not = 0$ and restate inequality \eqref{equ:generalized-upper-bound} as
\begin{equation} \label{equ:generalized-upper-bound:relative-error} 
  r_{k+1} \leq  \frac{1} {2} L K (1 +  \|D_k\|) \|z\| r_k^2 + \|E_k\| K M  (1 +  \|D_k\|) r_k \\ + \|D_k\|
\end{equation}
where $r_k$ is the normwise relative forward error given by
\begin{equation*}
  r_k = \|z - x_k\|/\|z\|.
\end{equation*}

\subsection{Stagnation} \label{sec:stagnation}

We assume that the sequences $\{D_k\}$ and $\{E_k\}$ are bounded. Let $D$ and $E$ be nonnegative numbers that satisfy
\begin{equation} \label{equ:upper-bounds-D-and-E} 
  \|D_k\| \leq D, \quad \|E_k \| \leq E.
\end{equation}
In this case, inequality \eqref{equ:generalized-upper-bound:relative-error} implies
\begin{equation*}
  r_{k+1} \leq \frac{1}{2} LK(1+D) \|z\| r_k^2 + EMK(1+D) r_k  + D.
\end{equation*}
It is certain that the error will be reduced, i.e., $r_{k+1} < r_k$ when
\begin{align*}
  D 
  &< r_k - \left( \frac{1}{2} LK(1+D) \|z\| r_k^2 + EMK(1+D) r_k^2 \right)  \\
  &= \left( 1 - EMK(1+D) \right) r_k -  \frac{1}{2} LK(1+D) \|z\| r_k^2.
\end{align*}
This condition is equivalent to the following inequality:
\begin{equation*}
   D  - \left[ 1 - EMK(1+D) \right] r_k +  \frac{1}{2} LK(1+D)\|z\| r_k^2 < 0.
\end{equation*}
This is an inequality of the second degree. The roots are
\begin{equation*}
  \lambda_{\pm} = \frac{\left[ 1 - EMK(1+D) \right] \pm \sqrt{\left[ 1 - EMK(1+D) \right]^2 - 2 LK(1+D)D\|z\|}}{LK(1+D)\|z\|}.
\end{equation*}
If $D$ and $E$ are sufficiently small then the roots are positive real numbers and the error will certainly be reduced provided
\begin{equation*}
  \lambda_{-} < r_k < \lambda_{+}.
\end{equation*}
It follows that we cannot expect to do better than
\begin{equation*}
  r_k = \frac{\|z-x_k\|}{\|z\|} \approx \lambda_{-}.
\end{equation*}
If $D$ and $E$ are sufficiently small, then a Taylor expansion ensures that
\begin{equation*}
  \lambda_{-} \approx \frac{D}{\left(1-EMK(1+D)\right)^2}
\end{equation*}
is a good approximation. We cannot expect to do better than $r_{k+1} = \lambda_-$, but the threshold of stagnation is not particularly sensitive to the size of $E$.

\subsection{The Decay of the Error} \label{sec:decay}

We assume that the sequences $\{D_k\}$ and $\{E_k\}$ are bounded. Let $D$ and $E$ be upper bounds that satisfy \eqref{equ:upper-bounds-D-and-E}. Suppose that we are not near the threshold of stagnation in the sense that
\begin{equation} \label{equ:far-from-stagnation:linear-case} 
  D \leq C r_k.
\end{equation}
for a (modest) constant $C>0$. In this case, inequality \eqref{equ:generalized-upper-bound:relative-error} implies
\begin{equation} \label{equ:local-linear-decay} 
  r_{k+1} \leq \rho_k r_k, \quad \rho_k = \frac{1} {2} L K (1 + D) \|z\| r_k + E K M  (1 +  D) + C.
\end{equation}
If $C<1$, then we may have $\rho_k < 1$, when $r_k$ and $E$ are sufficiently small. This explains when and why local linear decay is possible.
We now strengthen our assumptions. Suppose that there is a $\lambda \in (0,1]$ and $C_1 > 0$ such that
\begin{equation} \label{equ:error-decay:lambda} 
  \| E_k \| \leq C_1 r_k^{\lambda}
\end{equation}
and that we are far from the threshold of stagnation in the sense that
\begin{equation} \label{equ:far-from-stagnation:lambda} 
 D \leq C_2 r_k^{1 + \lambda}
\end{equation}
for a (modest) constant $C_2 > 0$. In this case, inequality \eqref{equ:generalized-upper-bound:relative-error} implies 
\begin{equation} \label{equ:superlinear-decay} 
   r_{k+1} \leq  \left[ \frac{1} {2} L K (1 + D) \|z\| r_k^{1-\lambda} + C_1 K M  (1 +  D) + C_2 \right] r_k^{1 + \lambda}.
\end{equation}
This explains when and why local superlinear decay is possible.

\subsection{Convergence} \label{sec:convergence}

We cannot expect a quasi-Newton method to converge unless the subtraction $y_{k+1} = y_k - t_k$ is exact. Then $D_k = 0$ and inequality \eqref{equ:generalized-upper-bound:relative-error} implies 
\begin{equation*}
  r_{k+1} \leq \eta_k r_k, \quad \eta_k = \left( \frac{1} {2} L K \|z\| r_k + \|E_k\| K M \right).
\end{equation*}
We may have $\eta_k < 1$ for all $k$, provided $E = \sup \|E_k\|$ and $r_0$ are sufficiently small. This explains when and why local linear convergence is possible. We now strengthen our assumptions. Suppose that there is a $\lambda \in (0,1]$ and a $C>0$ such that 
\begin{equation*}
  \forall k \in \mathbb{N} \: : \: \|E_k\| \leq C r_k^{\lambda}.
\end{equation*}
In this case, inequality \eqref{equ:generalized-upper-bound:relative-error} implies 
\begin{equation*}
  r_{k+1} \leq \left( \frac{1} {2} L K \|z\| r_k^{1-\lambda} + C K M \right)  r_k^{1 + \lambda}.
\end{equation*}
This inequality allows us to establish local convergence of order at least $1+\lambda$.

\subsection{How Accurate Does Newton Have To Be?} \label{sec:how-accurate}

We will assume the use of normal IEEE floating point numbers and we will apply the analysis given in Section \ref{sec:decay}. If we use the 1-norm, the 2-norm or the $\infty$-norm, then we may choose $D=u$, where $u$ is the unit roundoff. Suppose that equations \eqref{equ:error-decay:lambda} and \eqref{equ:far-from-stagnation:lambda} are satisfied with $\lambda = 1$. Then inequality \eqref{equ:superlinear-decay} reduces to
\begin{equation*}
   r_{k+1} \leq  \left[ \frac{1} {2} L K (1 + u) \|z\| + C_1 K M  (1 +  u) + C_2 \right] r_k^2.
\end{equation*}
Due to the basic limitations of IEEE floating point arithmetic we cannot expect to do better than
\begin{equation*}
  r_{k+1} = O(u), \quad u \rightarrow 0, \quad u > 0.
\end{equation*}
It follows that we \emph{never} need to do better than
\begin{equation*}
  \|E_k\| = O(\sqrt{u}), \quad u \rightarrow 0, \quad u > 0.
\end{equation*}

\section{Numerical Experiments}

\subsection{Computing square roots} \label{sec:square-roots}

Let $\alpha>0$ and consider the problem of solving the nonlinear equation
\begin{equation*}
  f(x) = x^2 - \alpha = 0
\end{equation*}
with respect to $x>0$ using Newton's method. Let $r_k$ denote the relative error after $k$ Newton steps. A simple calculation based on Lemma \ref{lem:newton:error-formula} yields
\begin{equation*}
  |r_{k+1}| \leq r_k^2/2, \quad |r_k| \leq 2 \left( |r_0|/2 \right)^{2^k}.
\end{equation*}
We see that convergence is certain when $|r_0| < 2$. The general case of $\alpha > 0$ can be reduced to the special case of $\alpha \in [1,4)$ by accessing and manipulating the binary representation directly. Let $x_0 : [1,4] \rightarrow \mathbb{R}$ denote the best uniform linear approximation of the square root function on the interval $[1,4]$. Then
  \begin{equation*}
    x_0(\alpha) = \alpha/3 + 17/24, \quad  |r_0(\alpha)|  \leq 1/24.
  \end{equation*}
  In order to illustrate Theorem \ref{thm:main} we execute the iteration
  \begin{equation*}
    x_{k+1} = x_k - (1+e_k)f(x_k)/f'(x_k)
  \end{equation*}
  where $e_k$ is a randomly generated number. Specifically, given $\epsilon > 0$ we choose $e_k$ such that $|e_k|$ is uniformly distributed in the interval $[\frac{1}{2} \epsilon, \epsilon]$ and the sign of $e_k$ is positive or negative with equal probability. Three choices, namely $\epsilon = 10^{-2}$ (left), $\epsilon = 10^{-8}$ (center) and $\epsilon = 10^{-12}$ (right) are illustrated in Figure \ref{fig:combined}.
  
   In each case, eventually the perturbed iteration reproduces either the computer's internal representation of the square root or stagnates with a relative error that is essentially the unit roundoff $u=2^{-53} \approx 10^{-16}$. When $\epsilon = 10^{-2}$ the quadratic convergence is lost, but the relative error is decreased by a factor of approximately $\epsilon = 10^{-2}$ from one iteration to the next, i.e., extremely rapid linear convergence. Quadratic convergence is restored when $\epsilon$ is reduced to $\epsilon = 10^{-8} \approx \sqrt{u}$. Further reductions of $\epsilon$ have no effect on the convergence as demonstrated by the case of $\epsilon = 10^{-12}$. We shall now explain exactly how far this experiment supports the theory that is presented in this paper.
  \paragraph{Stagnation}
  By Section \ref{sec:stagnation} we expect that the level of stagnation is essentially independent of the size of $E$, the upper bound on the relative error between the computed step and the step needed for Newton's method. This is clearly confirmed by the experiment.
  \paragraph{Error Decay}
  Since we are always very close to the positive zero of $f(x) = x^2 - \alpha$ we may choose
  \begin{equation*}
    L \approx 2, \quad K|z| \approx 1/2, \quad MK \approx 1, 
  \end{equation*}
  In the case of $\epsilon = 10^{-2}$, Figure~\ref{fig:combined} (left) shows that we satisfy inequality \eqref{equ:far-from-stagnation:linear-case} with $D = u$ and $C = \epsilon < 1$, i.e.,
  \begin{equation*}
    u \leq \epsilon r_k, \quad 0 \leq k < 5.
  \end{equation*}
  By equation \eqref{equ:local-linear-decay} we must have
  \begin{equation*}
    r_{k+1} \leq \rho_k r_k, \quad \rho_k \approx 2 \epsilon, \quad 0 < k < 5.
  \end{equation*}
  This is exactly the linear convergence that we have observed. In the case of $\epsilon = 10^{-8}$, Figure~\ref{fig:combined} (center) shows that we satisfy inequality \eqref{equ:far-from-stagnation:lambda} with $C_2 = 1$ and $\lambda = 1$, i.e.,
  \begin{equation*}
    u \leq r_k^2, \quad k = 0,1.
  \end{equation*}
  By inequality \eqref{equ:superlinear-decay} we must have quadratic decay in the sense that
  \begin{equation*}
    r_{k+1} \leq C r_k^2, \quad C \approx \frac{3}{2}, \quad k = 0,1.
  \end{equation*}
  Manual inspection of Figure \ref{fig:combined} reveals that the actual constant is close to $1$ and certainly smaller than $C \approx \frac{3}{2}$. By Section \ref{sec:how-accurate} we do not expect any benefits from using an $\epsilon$ that is substantially smaller than $\sqrt{u}$. This is also supported by the experiment.

\subsection{Constrained molecular dynamics} \label{sec:md}

The objective is to solve a system of differential algebraic equations
\begin{align*}
  q'(t) &= v(t), \\
  M v'(t) &= f(q(t)) - g'(q(t))^T \lambda(t), \\
  g(q(t)) &=0.
\end{align*}
Here $q$ and $v$ are vectors that represent the position and velocity of all atoms, $M$ is a nonsingular diagonal mass matrix, $f$ represents the external forces acting on the atoms and $-g'(q)^T \lambda$ represents the constraint forces. Here $g'$ is the Jacobian of the constraint function $g$. The standard algorithm for this problem is the SHAKE algorithm \cite{shake1977}. It uses a pair of staggered uniform grids and takes the form
\begin{align}
  v_{n+1/2} &= v_{n-1/2} + h M^{-1} \left( f(q_n) - g'(q_n)^T \lambda_n \right), \nonumber \\
  q_{n+1} &= q_n + h v_{n + 1/2}, \nonumber \\
  g(q_{n+1}) &= 0, \label{equ:nonlinear:1}
\end{align}
where $h>0$ is the fixed time step and $q_n \approx q(t_n)$, $v_{n+\frac{1}{2}} \approx v(t_{n+\frac{1}{2}})$, where $t_n = nh$ and $t_{n+\frac{1}{2}} = (n+1/2)h$. Equation \eqref{equ:nonlinear:1} is really a nonlinear equation for the unknown Lagrange multiplier $\lambda_n$, specifically
\begin{equation*}
  g(\phi_n(\lambda)) = 0, \quad \phi_n(\lambda)= q_n + h(v_{n-\frac{1}{2}} + h M^{-1}( f(q_n) - g'(q_n)^T \lambda)).
\end{equation*}
The relevant Jacobian is the matrix 
\begin{equation*}
  A_n(\lambda) = \left( g(\phi_n(\lambda)) \right)' =  g'(\phi_n(\lambda))M^{-1} g'(q_n)^T.
\end{equation*}
The matrix $A_n(\lambda)$ is close to the constant symmetric matrix $S_n$ given by
\begin{equation*}
  S_n = g'(q_n)M^{-1} g'(q_n)^T
\end{equation*}
simply because $\phi_n(\lambda) = q_n + O(h)$ as $h \rightarrow 0$ and $h>0$. It is therefore natural to investigate if the constant matrix $S_n^{-1}$ is a good approximation of $A_n^{-1}(\lambda)$. 

For this experiment, we executed a production molecular dynamics run using the GROMACS \cite{berendsen1995} package. We replaced the constraint solver used by GROMACS's SHAKE function with a quasi-Newton method based on the matrix $S_n$.  Our experiment was based on GROMACS's Lysozyme in Water Tutorial \cite{lysozyme-tutorial}. We simulated a hen egg white lysozyme \cite{lysozyme2022} molecule submerged in water inside a cubic box. Lysozyme is a protein that consists of a single polypeptide chain of 129 amino acid residues cross-lined at 4 places by disulfide bonds between cysteine side-chains in different parts of the molecule. Lysozyme has $1960$ atoms and $1984$ bond length constraints.
Before executing the production run, we added ions to the system to make it electrically neutral. The energy of the system was minimized using the steepest descent algorithm until the maximum force of the system was below 1000.0 kJ/(mol$\cdot$nm). Then, we executed 100 ps of a temperature equilibration step using a V-Rescale thermostat in an NVT ensemble to stabilize the temperature of the system at 310 K. To finish, we stabilized the pressure of the system at 1 Bar for another 100 ps using a V-Rescale thermostat and a Parrinello-Rahman barostat in an NPT ensemble.
We executed a 100 ps production run with a 2 fs time step using an NPT ensemble with a V-Rescale thermostat and a Parrinello-Rahman barostat with time constants of 0.1 and 2 ps, respectively. We collected the results of the constraint solver every 5 ps starting at time-step 5 ps, for a total of 20 sample points.
Specifically, we recorded the normwise relative error $r_k = \|\lambda_n-x_k\|_2/\|\lambda_n\|_2$ as a function of the number $k$ of quasi-Newton steps using the symmetric matrix $S_n$ instead of the nonsymmetric matrix $A_n$ and we recorded $\|E_k\|_2 = \|s_k - t_k\|_2/\|s_k\|_2$ where $t_k$ is needed for a quasi-Newton step and $s_k$ is needed a Newton step. By \eqref{equ:local-linear-decay} we have $r_{k+1} \leq \rho_k r_k$, but we cannot hope for more than $r_{k+1} \approx \rho_k r_k$ where $\rho_k = O(\|E_k\|_2)$ and this is indeed what we find in the Figure \ref{fig:validation} until we hit the level of stagnation where the impact of rounding errors is keenly felt.

\section{Related Work}

It is well-known that Newton's method has local quadratic convergence subject to certain regularity conditions. The simplest proof known to us is due to Mysovskii \cite{mysovskii1949}. Dembo et. al. \cite{dembo1982inexact} analyzed the convergence of quasi-Newton methods in terms of the ratio between the norm of linear residual, i.e., $r_k = F(x_k) - F'(x_k)t_k$ and the norm of the nonlinear residual $F(x_k)$. Tisseur \cite{tisseur2001} studied the impact of rounding errors in terms of the backward error associated with approximating the Jacobians and computing the corrections, as well as the errors associated with computing the residuals. Here we have pursued a third option by viewing the correction $t_k$ as an approximation of the correction $s_k$ needed for an exact Newton step. Tisseur found that Newton's method stagnate at a level that is essentially independent of the stability of the solver and we have confirmed that this is true for quasi-Newton methods in general. It is clear to us from reading Theorem 3.1 of Dennis and Moore's paper \cite{dennis1977} that they would instantly recognize Lemma 3, but we cannot find the result stated explicitly anywhere. Forsgren \cite{forsgren2009} uses a stationary method for solving linear systems to construct a quasi-Newton method that is so exact that the convergence is quadratic. Section \ref{sec:square-roots} contains a simple illustration of this phenomenon.

\section{Conclusions}

Quasi-Newton methods can also be analyzed in terms of the relative error between Newton's correction and the computed correction. We achieve quadratic convergence when this error is $O(\sqrt{u})$. This fact represent an opportunity for improving the time-to-solution for nonlinear equations. General purpose libraries for solving sparse linear systems apply pivoting for the sake of numerical accuracy and stability. In the context of quasi-Newton methods we do not need maximum accuracy. Rather, there is some freedom to pivot for the sake of parallelism. If we fail to achieve quadratic convergence, then we are likely to still converge rapidly. It is therefore worthwhile to develop sparse solvers that pivot mainly for the sake of parallelism.

\subsubsection{Acknowledgments}

Prof. I. Argyros commented on an early draft of this paper and provided the reference to the work of I. P. Mysovskii. The first author is supported by eSSENCE, a collaborative e-Science programme funded by the Swedish Research Council within the framework of the strategic research areas designated by the Swedish Government. This work has been partially supported by the Spanish Ministry of Science and Innovation (contract PID2019-107255GB-C21/AEI/10.13039/501100011033), by the Generalitat de Catalunya (contract 2017-SGR-1328), and by Lenovo-BSC Contract-Framework Contract (2020).

%
%
%


 \begin{landscape}
  \begin{figure}[ht]
    \includegraphics[width=18cm]{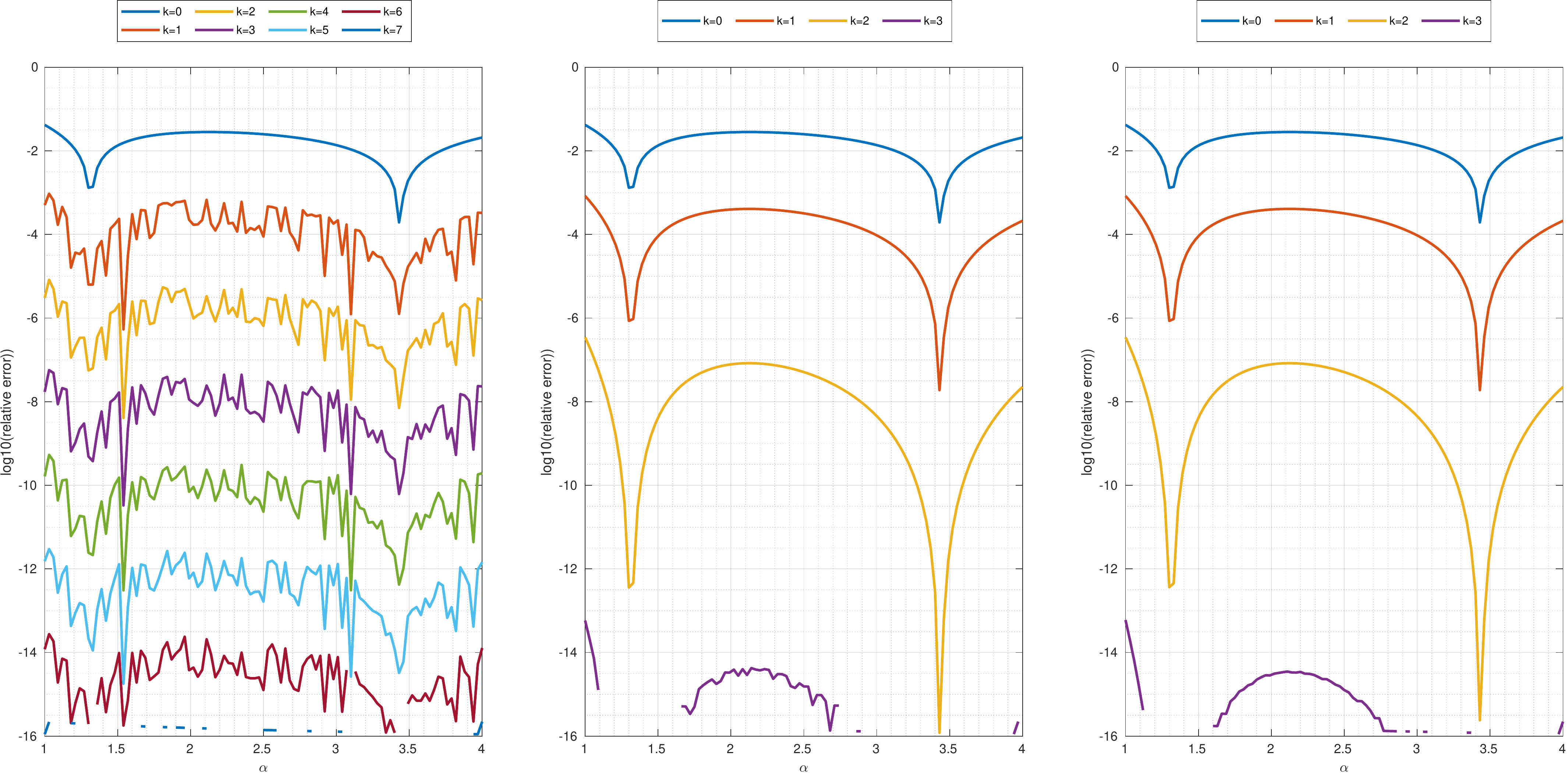}
\caption{The impact of inaccuracies on the convergence of Newton's method for a computing square roots. Newton's corrections have been perturbed with random relative errors of size $\epsilon \approx 10^{-2}$ (left), $\epsilon \approx 10^{-8}$ (center) and $\epsilon \approx 10^{-12}$. In each case, the last iteration produces an approximation that matches the computer's value of the square root at many sample points. In such cases, the computed relative error is $0$. Therefore, it is not possible to plot a data point and the last curve of each plot are discontinuous. } \label{fig:combined}
\end{figure}
\end{landscape}

\begin{landscape}
  \begin{figure}[ht]
    \subfloat[Constraint violation\label{fig:constraint-violation}]{%
      \includegraphics[width=6cm]{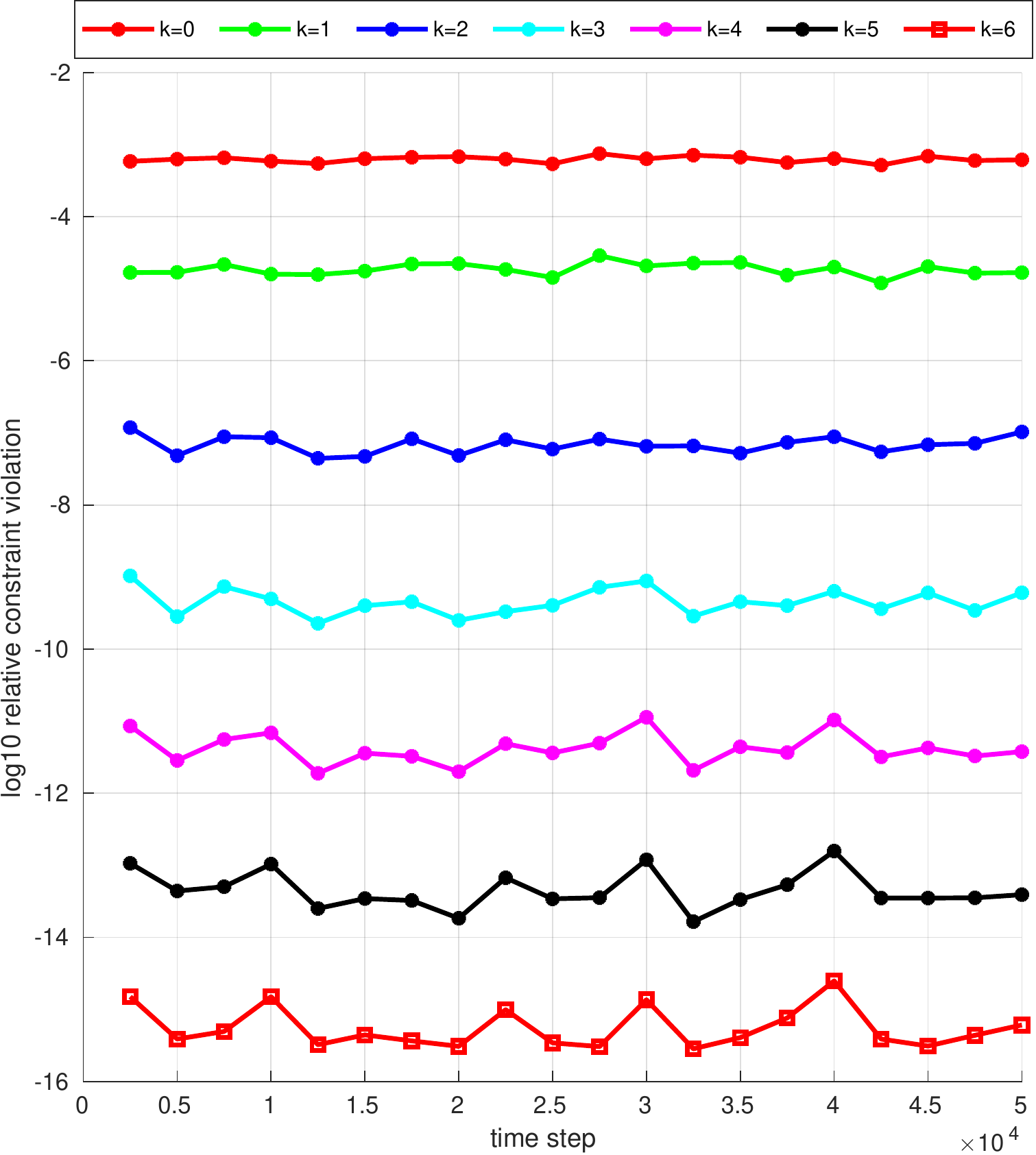}%
    }\hfil
    \subfloat[Relative error\label{fig:relative-error}]{%
      \includegraphics[width=6cm]{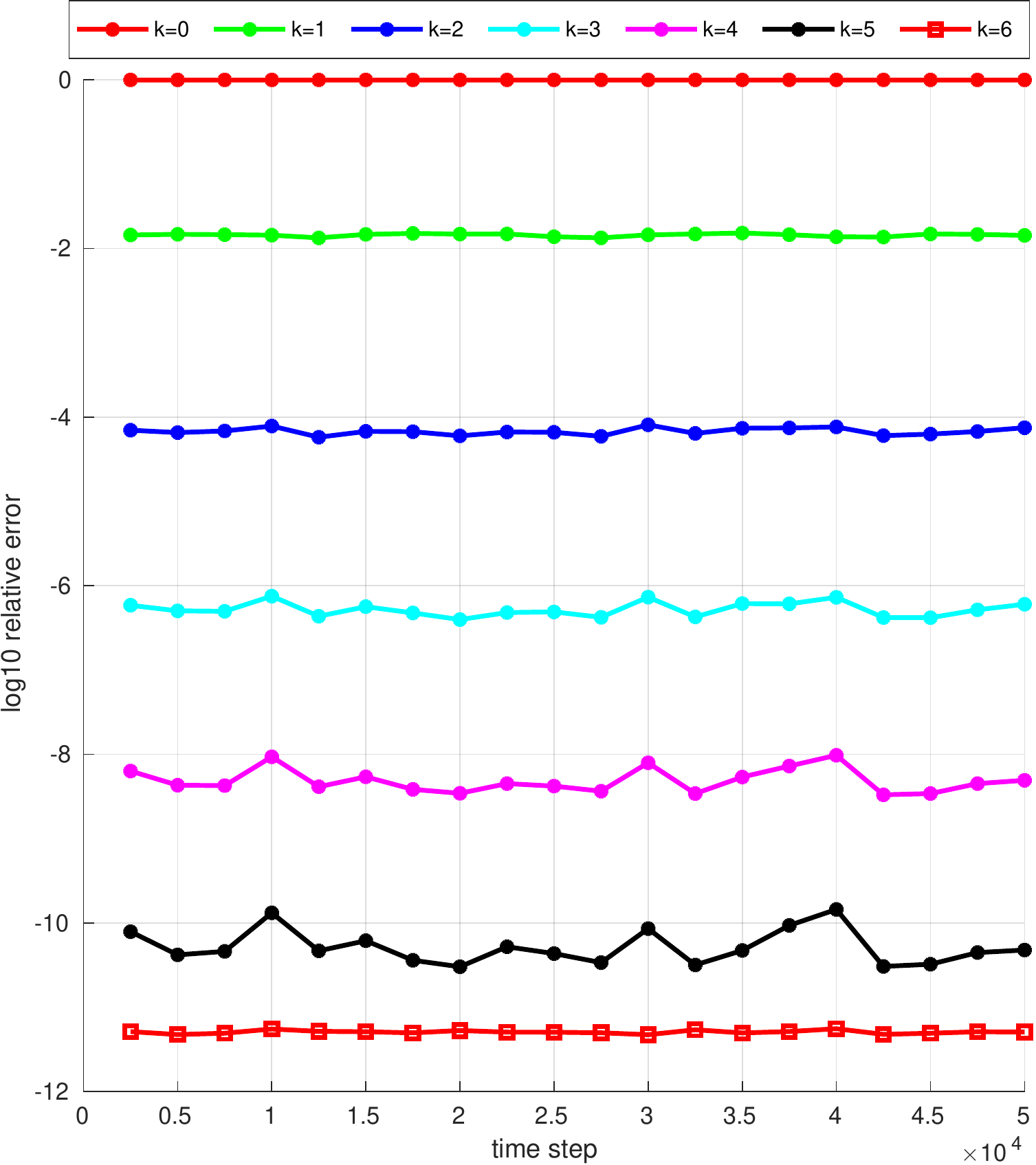}%
    }\hfil
   \subfloat[Validation\label{fig:validation}]{%
      \includegraphics[width=6cm]{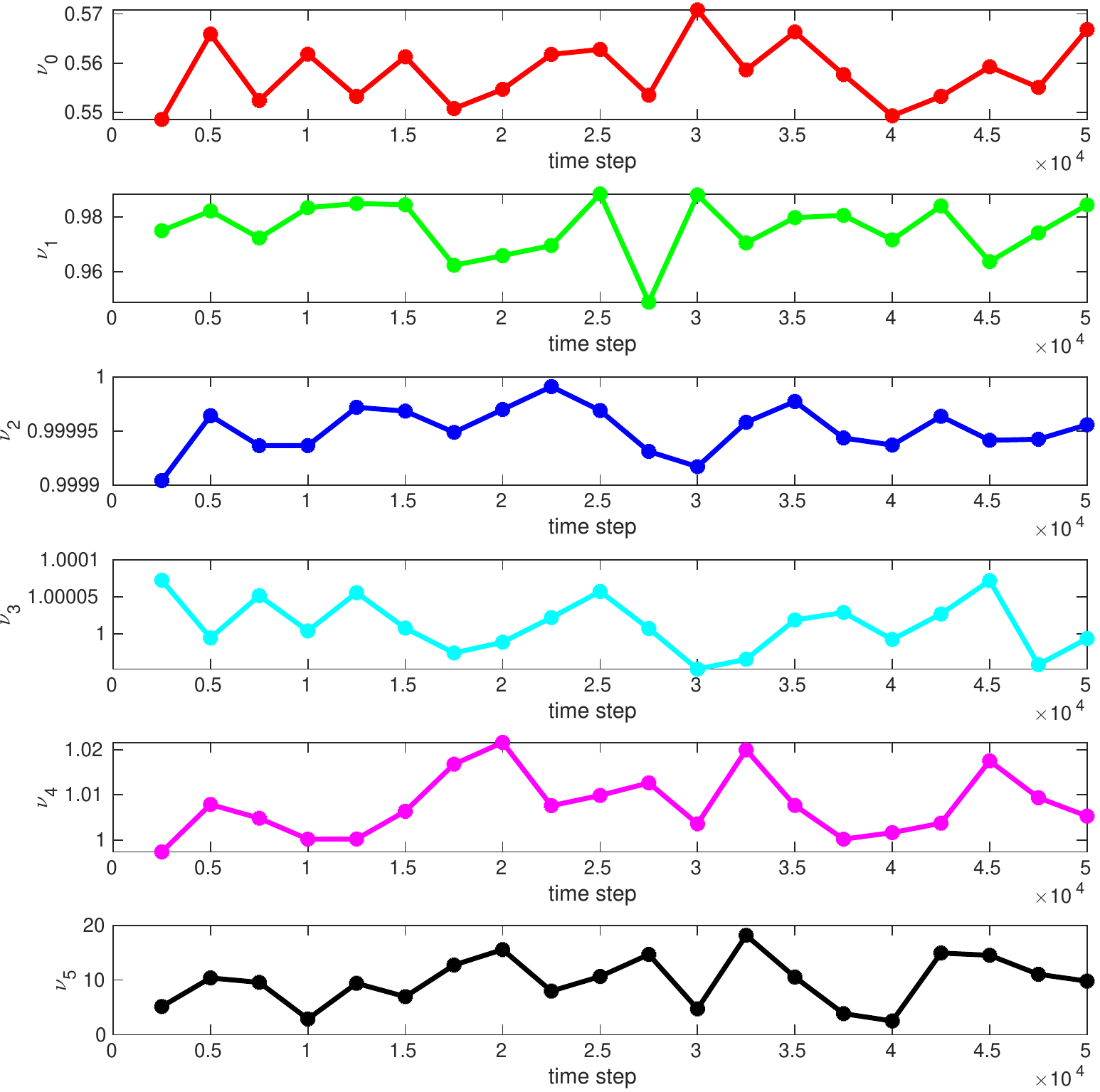}%
    }
   \caption{Data generated during a simulation of lysozyme in water using GROMACS. The GROMACS solver have been replaced with a quasi-Newton method that uses a fixed symmetric approximation of the Jacobian. Figure~\ref{fig:constraint-violation} is mainly of interest to computational chemists. It shows that the maximum relative constraint violation always stagnates at a level that is essentially the IEEE double precision unit roundoff after 6 quasi-Newton steps. The convergence is always linear and the rate of convergence is $\mu \approx 10^{-2}$. Figure~\ref{fig:relative-error} shows the development of the relative error $r_k$ between the relevant zero $z$, i.e., the Lagrange multiplier for the current time step and the approximations generated by $k$ steps of the quasi-Newton method. The convergence is always linear and the rate of convergence is $\mu \approx 10^{-2}$. Figure~\ref{fig:validation} provides partial validation of a theoretical result. Specifically, the fractions $\nu_k = r_{k+1}/(r_k \|E_k\|_2)$ are plotted for $k=0,1,2,3,4,5$. When $\nu_k$ is modest, we have experimental verification that the rate of convergence is essentially $\|E_k\|$.}
  \label{fig:molecular-dynamics}
  \end{figure}
\end{landscape}
 
\end{document}